\newtheorem{lemma}{Lemma}
\newtheorem{proposition}[lemma]{Proposition}
\newtheorem{theorem}[lemma]{Theorem}
\newcommand{\R}{\mathbb R}
\newcommand{\ve}{\varepsilon}
\title[MCF evolving away from heat flow]{Mean curvature flow of entire graphs evolving away from the heat flow}
\author{Gregory Drugan}
\address{Department of Mathematics, University of Oregon, Eugene, OR 97403}
\email{drugan@uoregon.edu}
\author{Xuan Hien Nguyen}
\address{Department of Mathematics, Iowa State University, Ames, IA 50011}
\email{xhnguyen@iastate.edu}
\subjclass[2010]{Primary 53C44, 35K15; Secondary 35B40}
\keywords{Mean curvature flow, heat flow, asymptotic behavior}
\begin{document}

\maketitle

\begin{abstract}
We present two initial graphs over the entire $\R^n$, $n \geq 2$ for which the mean curvature flow behaves differently from the heat flow. In the first example, the two flows  stabilize at different heights. With our second example, the mean curvature flow oscillates indefinitely while the heat flow stabilizes. 
These results highlight the difference between dimensions $n \geq 2$ and dimension $n=1$, where Nara--Taniguchi proved that entire graphs in $C^{2,\alpha}(\R)$ evolving under curve shortening flow converge to solutions to the heat equation with the same initial data. 
\end{abstract}

\section{Introduction}

We consider the solution $u:  \R^n \times [0,T) \to \R$ to the {\bf graphical mean curvature flow (MCF)},
	\begin{gather}
	\label{eq:mcf-nonparam}
	\displaystyle \frac{\partial u}{\partial t} = \sqrt{1 + |Du|^2}\, \textrm{div} \left(\frac{Du}{\sqrt{1+ |Du|^2}} \right), \\
	u(\cdot, 0 ) = u_0, \notag
	\end{gather}
where $u_0\in C^{2,\alpha}(\mathbb R^n), \alpha \in (0,1]$, and compare it to the solution $v : [0, T) \times \R^n \to \R$ to the heat equation with the same initial graph,
	\begin{gather}
	\label{eq:heat}
	\displaystyle \frac{\partial v}{\partial t} = \Delta v,\\
	v(\cdot, 0) = u_0. \notag
	\end{gather}
When $n=1$, the mean curvature flow is called curve shortening flow and equation \eqref{eq:mcf-nonparam} reduces to 
	$
	\frac{\partial u}{\partial t} = \frac{ u_{xx}}{1+ (u_x)^2}.
	$
For this dimension, Nara and Taniguchi~\cite{NT} showed that if the initial graph $u_0$ is in $C^{2,\alpha}(\R)$, the solution to the curve shortening flow converges to the solution to the heat equation with the same initial data. The proof relies heavily on the existence of a Gauss kernel and the fact that the right-hand side is a potential. 

In higher dimensions ($n\geq 2$), a couple of results suggest a similar behavior. If the initial graph $u_0$ is compactly supported, an argument by Huisken using large balls as barriers and the maximum principle shows that $u(\cdot, t)$ goes to zero uniformly as $t \to \infty$, which is also the asymptotic behavior of $v(\cdot, t)$  (see \cite{clutterbuck-schnurer-2011} for the mean curvature flow and \cite{repnikov-eidelman} for the heat equation). For bounded and radially symmetric entire initial graphs, Nara~\cite{nara;siam2008} proved the convergence of solutions to the mean curvature flow to solutions to the heat equation again.  

The object of this note is to show that the result of Nara--Taniguchi does not extend to higher dimensions in general.
	\begin{theorem}
	\label{thm:main}
	For $n \geq 2$, for every constant $\ve>0$, there is a function $u_0 \in C^{2,\alpha}(\R^n)$, $\alpha \in (0,1]$, for which the solution $u(\cdot, t)$  to the mean curvature flow \eqref{eq:mcf-nonparam} with initial data $u_0$ stabilizes to a constant $c_0 < \ve$, while the solution $v(\cdot, t)$  to the heat equation  \eqref{eq:heat} with initial data $u_0$ stabilizes to the constant 1. More precisely, we have
	\begin{equation}
	\label{eq:utoc0}
		u(\cdot, t) \to c_0 \text{ uniformly in $\mathbb R^n$ as } t\to \infty
	\end{equation}	
and  
	\begin{equation}
	\label{eq:vto1}
	v(\cdot, t) \to 1 \text{ uniformly in $\mathbb R^n$ as } t\to \infty.
	\end{equation}
\end{theorem}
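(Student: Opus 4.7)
The plan is to construct $u_0 \in C^{2,\alpha}(\mathbb{R}^n)$ that equals $1$ outside a carefully chosen collection of disjoint ``wells'' $B_k = B(p_k, r_k) \subset \mathbb{R}^n$, and that dips to a deeply negative value $-M_k$ at the center $p_k$ of each well, with smooth interpolation. The parameters $(p_k, r_k, M_k)$ are to be tuned so that (a) the union $\bigcup_k B_k$ has vanishing asymptotic density in the Gaussian-averaged sense at every base point, forcing the heat flow to stabilize at $1$, and (b) every point of $\mathbb{R}^n$ is close enough to some well $B_k$ that a shrinking upper barrier in $\mathbb{R}^{n+1}$ above that well forces $u(\cdot,t)$ down near $c_0 < \varepsilon$. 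The mechanism separating the two flows is intrinsically higher-dimensional: for $n \geq 2$ a round sphere in $\mathbb{R}^{n+1}$ shrinks under MCF at a definite rate and can serve as an upper barrier via the avoidance principle, whereas in dimension $n=1$ a ``sphere'' is a pair of points and yields no such barrier.

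\smallskip

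For the heat flow \eqref{eq:vto1}, write $v(x,t) = 1 + (G_t * (u_0-1))(x)$ with $G_t$ the Euclidean heat kernel and bound
\[
|v(x,t)-1| \;\leq\; \sum_k M_k \int_{B_k} G_t(x-y)\, dy.
\]
Choosing the placement and sizes of the wells so that the right-hand side tends to $0$ uniformly in $x$ as $t \to \infty$ gives the uniform convergence $v \to 1$. This is the classical type of density computation for the heat equation and amounts to a condition that the Gaussian mass that $\bigcup_k B_k$ carries about any base point $x$ vanishes as $t \to \infty$.

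\smallskip

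For \eqref{eq:utoc0}, the main tool is the avoidance principle for ambient MCF. Above each well $B_k$, place a sphere $\Sigma^{(k)}_0 \subset \mathbb{R}^{n+1}$ of radius $R_k$ centered at $(p_k,h_k)$ with $h_k$ chosen slightly below $\varepsilon$; the condition $u_0(x) \leq h_k - \sqrt{R_k^2-|x-p_k|^2}$ on $B_k$ (equivalent to $M_k \gtrsim R_k$) ensures that the initial graph of $u_0$ lies below the ball bounded by $\Sigma^{(k)}_0$. Under MCF, $\Sigma^{(k)}_t$ is the concentric sphere of radius $\sqrt{R_k^2 - 2nt}$ and shrinks to the point $(p_k,h_k)$ at time $T_k = R_k^2/(2n)$; avoidance applied to the evolving graph versus this shrinking sphere gives $u(x,t) \leq h_k - \sqrt{R_k^2 - 2nt - |x-p_k|^2}$ on $|x-p_k|\leq \sqrt{R_k^2 - 2nt}$, and in particular $u(p_k,T_k) \leq h_k \leq \varepsilon$.

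\smallskip

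\noindent\emph{Main obstacle.} The central difficulty is to upgrade these pointwise-at-pit-centers bounds, valid only at specific finite times $T_k$, into the uniform long-time statement $\sup_{x \in \mathbb{R}^n} u(x,t) \leq c_0 + o(1)$ as $t \to \infty$. Each sphere barrier controls $u$ only in a neighborhood of its well and only up to the vanishing time $T_k$; past that time, nothing in the barrier argument prevents $u(p_k,t)$ from rising back toward $1$. To handle this, I expect to use a multi-scale family of wells with radii $r_k \to \infty$ suitably spaced, so that pits of larger radii support barriers of larger radii covering larger regions and longer time intervals, and so that after every large time $t$ a fresh generation of barriers is available to cover all of $\mathbb{R}^n$ and maintain the upper bound. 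The delicate point is coordinating these scales so that the wells remain sparse enough (in the Gaussian sense) for the heat flow to still stabilize at $1$, while being numerous and deep enough at every scale for the barrier argument to give a uniform upper bound persisting as $t \to \infty$.
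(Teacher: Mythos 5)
Your proposal has the right ingredients (shrinking barriers via avoidance, a density condition for the heat flow, and a clear-eyed statement of the gap), but the basic geometry is inverted relative to what actually works, and as a consequence the ``main obstacle'' you identify is not a technical gap one can hope to close — it is a genuine obstruction that dooms the wells construction.

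The problem is this. You set $u_0 \equiv 1$ outside a sparse union of wells $B_k$ and dig deep pits inside them. For the heat flow to stabilize at $1$, the Gaussian density of $\bigcup_k B_k$ about every base point must vanish, so the wells occupy an asymptotically negligible fraction of $\mathbb{R}^n$. But then, on the huge complementary region where $u_0 \equiv 1$, nothing forces $u(\cdot,t)$ down: the flow is local, the constant function $1$ is a stationary solution, and your sphere barriers provide no information there. Indeed, each sphere $\Sigma^{(k)}_t$ is a compact closed hypersurface whose shadow in $\mathbb{R}^n$ is contained in $B_k$ (as you note, the sphere must fit inside the well — at $|x-p_k| = r_k$ the graph is at height $1 > \varepsilon > h_k$), and that shadow shrinks to the single point $p_k$ as $t \to T_k$. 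So you only ever learn $u(p_k,T_k) \leq h_k$ at one point at one time per well. You cannot patch these into a bound on $\sup_{\mathbb{R}^n} u(\cdot,t)$: between wells $u$ simply stays near $1$. Enlarging $r_k \to \infty$ while keeping the Gaussian density zero forces the separation between wells to grow even faster, which worsens the coverage; the two requirements are in direct conflict, not merely in tension.

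The paper's construction resolves this by flipping the roles of the baseline and the perturbation. There, $u_0 \equiv 0$ outside a $\mathbb{Z}^n$-periodic array of tall, thin spikes, each spike of unit integral so that the spatial averages of $u_0$ over large balls tend to $1$ (giving $v \to 1$ by Repnikov--Eidelman). Since $u_0$ is already $\leq \delta_0 < \varepsilon$ outside the spikes, the problem reduces to knocking down the spikes; this is done by placing Angenent's rescaled self-shrinking doughnut $\mathbb{T}_{(m,\delta_0)}$ around each spike, so that when the tori vanish at time $t_*$, the comparison principle on the expanding domain $\Omega_t$ (together with the maximum principle on the unbounded periodic domain) forces $u(\cdot,t_*) \leq \delta_0$ on all of $\mathbb{R}^n$, and $u(\cdot,t) \leq \delta_0$ thereafter. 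Periodicity also yields the uniform convergence of $u$ to a constant, via the Ecker--Huisken gradient and curvature estimates and the observation that $|Du| \to \text{const}$ plus periodicity forces $|Du| \to 0$. In short: the paper makes the heat flow \emph{gain} height from a perturbation that the MCF \emph{erases}, whereas you try to make the heat flow \emph{ignore} a perturbation that the MCF is supposed to somehow \emph{propagate} globally — the second is impossible precisely because the perturbation must be sparse. Two further remarks: (i) replacing spheres by tori is not cosmetic — a sphere over a spike gives no bound on $u$ outside the shadow, whereas a torus around a spike, once vanished, gives a bound on all of $\Omega_{t}$ which fills out $\mathbb{R}^n$; (ii) the ``higher-dimensional mechanism'' is the existence of Angenent's torus for $n \geq 2$, not the fact that spheres move (shrinking circles in $\mathbb{R}^2$ move too, so your explanation of why $n\geq 2$ is needed is not quite on target).
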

	\begin{theorem}
	\label{thm:mainosc}
	For $n \geq 2$, for every $\ve >0$, there is a function $w_0 \in C^{2,\alpha}(\R^n)$, $\alpha \in (0,1]$, for which the solution $w(\cdot, t)$  to the mean curvature flow \eqref{eq:mcf-nonparam} with initial data $w_0$ oscillates
	\begin{equation}
	\label{eq:w:infsup}
	\liminf_{t\to \infty} w(0,t) \leq \ve, \quad \limsup_{t\to \infty} w(0,t) \geq 1,
	\end{equation}
and the solution $v(\cdot,t)$ to the heat flow \eqref{eq:heat} with initial data $w_0$ tends to $1$ uniformly on $\mathbb R^n$ as $t \to \infty$.
	\end{theorem}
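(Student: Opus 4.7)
The plan is to build $w_0$ by gluing together, at rapidly increasing radial scales, alternating ``decay layers'' (modeled on the Theorem \ref{thm:main} building block) and ``plateau layers'' at height $1$. Each decay layer will drive the MCF at the origin below $\ve$ at some time $t_k$, and the subsequent plateau layer will drive it back above $1-\ve$ at a later time $s_k$; as the scales grow rapidly, both sequences of times march off to infinity and the MCF at the origin is forced to oscillate. The heat flow of $w_0$ converges to $1$ uniformly provided the plateau layers fill $\R^n$ with asymptotic density $1$, which we arrange by choosing the layer widths appropriately; one then gets $v(\cdot,t)\to 1$ from the standard Gaussian kernel representation $v(x,t)=(4\pi t)^{-n/2}\int e^{-|y-x|^2/(4t)}w_0(y)\,dy$.

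More concretely, let $u_0$ be the datum furnished by Theorem \ref{thm:main} for $\ve/2$, with MCF $u$ satisfying $u(\cdot, T_0)\le \ve/2$ on $\R^n$ for some $T_0<\infty$. Using the parabolic scaling invariance $(x,u,t)\mapsto(\lambda x,\lambda u,\lambda^2 t)$ of the graphical MCF, set $u_0^{(k)}(x)=\lambda_k u_0(x/\lambda_k)$ for suitable $\lambda_k\le 1$, so that $u_0^{(k)}$ has values in $[0,1]$ and the MCF $u^{(k)}$ satisfies $u^{(k)}(0,\lambda_k^2 T_0)\le \ve/2$. Define $w_0\in C^{2,\alpha}(\R^n)$ radially symmetric by placing (a truncation of) $u_0^{(k)}$ on the ball $B_{R_k}$ and setting $w_0\equiv 1$ on a surrounding plateau annulus $B_{\rho_k}\setminus B_{R_k}$, smoothly interpolated. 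The \emph{lower} oscillation $w(0,t_k)\le \ve$ will be established by comparing $w$ from above with the MCF of the datum $\overline w_0^{(k)}$ equal to $u_0^{(k)}$ on $B_{R_k}$ and to $1$ outside (which dominates $w_0$ since every layer takes values in $[0,1]$), combined with a scale-localized quantitative version of Theorem \ref{thm:main}. The \emph{upper} oscillation $w(0,s_k)\ge 1-\ve$ will be established by comparing $w$ from below with the MCF of the ``ring'' datum $\underline w_0^{(k)}$ equal to $0$ on $B_{R_k}$, $1$ on $B_{\rho_k}\setminus B_{R_k}$, and $0$ outside, using the shrinking-cylinder rule $R(t)^2=R(0)^2-2(n-1)t$ to argue that the inner wall closes at time $s_k\sim R_k^2/(2(n-1))$, at which point the origin briefly inherits the plateau height. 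The scales $(R_k,\rho_k,\lambda_k)$ are chosen inductively so that the generations do not interfere and $t_k,s_k\to\infty$.

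\textbf{Main obstacle.} The principal technical difficulty is the scale-localized, quantitative version of Theorem \ref{thm:main} needed for the lower oscillation: its global decay statement does not immediately yield $\overline w^{(k)}(0,t_k)\le \ve$, because $\overline w_0^{(k)}$ differs from $u_0^{(k)}$ outside $B_{R_k}$ (being capped at $1$ instead of following the tail of $u_0^{(k)}$). I expect this to follow by taking $R_k$ large compared to $\lambda_k\sqrt{T_0}$, so that the MCF at the origin up to time $\lambda_k^2 T_0$ is essentially determined by the initial data in $B_{R_k}$; making this precise will require either extracting an interior estimate from the proof of Theorem \ref{thm:main} or constructing sharp radial sub/super-solutions that confine the MCF behavior at the origin to a bounded region during the relevant time interval. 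A parallel, related challenge is to arrange the plateau layers to have asymptotic density one while still allowing the decay layers to be effective, which constrains the inductive choice of $(R_k,\rho_k)$ but, once the localization is established, becomes essentially routine.
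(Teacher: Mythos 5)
Your approach is genuinely different from the paper's, and it contains a structural flaw in addition to the technical gap you flag. The paper does not attempt to manufacture oscillation from scratch in a radial construction; instead it imports the one-dimensional oscillating example $\phi_0$ of Nara--Taniguchi (Proposition 3.4 of \cite{NT}), whose initial data is constant $a$ or $b$ on factorially-spaced intervals, extends it to $\R^n$ as a translation-invariant slab profile $\psi_0(x)=\phi_0(x_1)$, and then adds the Theorem \ref{thm:main} spikes only in the \emph{low} slabs $S_{2k+1}$. Since $\psi(x,t)=\phi(x_1,t)$ solves graphical MCF exactly and $w_0\geq\psi_0$, the upper oscillation $\limsup w(0,t)\geq 1$ is immediate by comparison; the lower oscillation follows because shrinking tori and spheres kill the spikes by a fixed time $t_*$, after which $w(\cdot,t_*)$ sits below a new $1$-dimensional Nara--Taniguchi profile $\phi_0^+$ whose CSF solution still has $\liminf\leq\ve$. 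The only job of the spikes is to raise the average of $w_0$ to $1$ everywhere so that Repnikov--Eidelman/Kamin give $v(\cdot,t)\to 1$; they are essentially invisible to the MCF.

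The structural problem with your radial scheme is the density condition you impose for the heat flow. You rescale the Theorem \ref{thm:main} datum by the parabolic scaling $u_0^{(k)}(x)=\lambda_k u_0(x/\lambda_k)$ (forced on you if you want the MCF decay time to scale as $\lambda_k^2 T_0$), but this makes the local average of the decay layer equal to $\lambda_k$, not $1$. Consequently, to get $v(\cdot,t)\to 1$ you ask that the plateau annuli have asymptotic density $1$. But that same requirement is precisely what would prevent the MCF from oscillating: if the decay annuli have density $0$, then at every sufficiently large scale the initial graph is overwhelmingly at height $1$, and once the solution at the origin has been driven up to $\approx 1$ by the $k$-th plateau there is no mechanism with enough mass to pull it back down. (In the Nara--Taniguchi example the key feature is that \emph{neither} the $a$-set nor the $b$-set has a density limit --- the densities oscillate factorially --- and it is exactly this non-convergence that makes both the CSF and, without the paper's spike correction, the heat flow oscillate.) Your construction destroys this feature. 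One could try to repair it by keeping $\lambda_k$ away from $0$, but then you lose the parabolic-rescaling control on the decay time, and you are effectively trying to re-derive the Nara--Taniguchi oscillation theorem in a radial, higher-dimensional setting, which is a substantially harder problem than the theorem you are proving.

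Two further points, smaller but real: (i) the ``shrinking-cylinder rule'' you invoke for the upper oscillation is not directly usable, since a vertical cylinder is not disjoint from an entire graph and is not itself a graph; you would need a compactly supported radial subsolution (e.g.\ a shrinking sphere or cap placed under the moat), and then you must still prove that the value at the origin rises to within $\ve$ of the plateau height \emph{before} the outer boundary of the plateau collapses, which is not provided by Huisken's argument (that argument only says compactly supported data tend to $0$). (ii) The localization you flag as the ``main obstacle'' for the lower oscillation is indeed nontrivial, and the paper avoids it entirely by replacing the localized Theorem \ref{thm:main} with the global tori/sphere barrier argument in the slabs, which yields a pointwise estimate $w(x,t_*)<\max(\rho_0,\delta_0)$ on $S_{2k+1}$ directly and feeds into the $1$-dimensional comparison. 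In short, the proposal's two heuristics (localized decay and ring-closing) do not yet add up to a proof, and the density choice for the heat flow is in direct conflict with what the MCF oscillation needs.
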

These theorems are reminiscent of the different behaviors of the mean curvature flow of embedded initial data when $n \geq 2$ or when $n=1$. Indeed, Grayson proved that embedded curves remain embedded under mean curvature flow \cite{grayson;embedded-curves}, but embedded surfaces can pinch off \cite{grayson;evolution-surfaces}. 

The long time existence of smooth solutions $u$ for entire Lipschitz graphs $u_0$ was established by Ecker--Huisken~\cite{EH}. Looking at interior estimates for the mean curvature flow from the same authors~\cite{ecker-huisken;interior-estimates} and standard theory for the heat equation, one can see that both $u$ and $v$ move more slowly as time goes to infinity. Therefore, the flows cannot separate much as time gets large and what happens right at the start is critical. Both our functions $u_0$ and $w_0$ illustrate  how the mean curvature flow can pull away from the heat flow instantly. The idea for $u_0$ is to build tall thin spikes that enclose a large volume. The solution to the heat equation converges to the average of the volume enclosed between the graph of $u_0$ and the base $n$-plane, while the evolution by mean curvature loses a lot of volume at the start and will tend to a smaller constant. We use a similar idea for $w_0$. Starting with an oscillating graph, we add spikes in the lower regions, thereby adding volume. This forces the heat flow to stabilize but does not affect the mean curvature flow much.  We use shrinking doughnuts and spheres as barriers.

\vspace{7pt}

\noindent{\bf Acknowledgment.} The authors would like to thank Bruce Kleiner for suggesting the first example  during the workshop ``Geometric flows and Riemannian geometry" at the American Institute of Mathematics (AIM) in September 2015. We would like to thank the AIM for the hospitality and the organizers of the workshop, Lei Ni and Burkhard Wilking, for the invitation.


\section{Self-shrinking torus}
\label{sec:torus}
In~\cite{Ang}, Angenent constructed an embedded rotationally symmetric torus in $\mathbb{R}^{n+1}$ that shrinks self-similarly under mean curvature flow to a point in finite time. We denote both this torus and its parametrization by $\mathbb{T}$.  The standard scaling is such that $\mathbb{T}: S^{n-1} \times S^1 \to \mathbb{R}^{n+1}$ is a solution to the self-shrinker equation 
	\begin{equation}
	\label{eq:self-shrinker}
	{\bf H}_\mathbb{T} = -\frac{1}{2} \mathbb{T}^\perp,
	\end{equation}
where ${\bf H}$ is the mean curvature and $\perp$ is the projection onto the normal space. For what follows, we require only the existence of such a shrinking torus; the uniqueness of $\mathbb{T}$ is still an open problem.

For applications in the following sections, we will work with a rescaling of $\mathbb{T}$, which we denote by $\mathbb{T}_0$. The scale is chosen so that $\mathbb{T}_0$ is in the cylinder of radius $1/2$ around its axis of rotation and its height is less than $\ve$. Notice that any rescaling of $\mathbb{T}$ also shrinks self-similarly under mean curvature flow to the origin in finite time, although it no longer satisfies the self-shrinker equation \eqref{eq:self-shrinker}.

To determine the scaling for $\mathbb{T}_0$, we consider its profile curve in the half-plane $\{(r,z) \in \mathbb{R}^2 \, | \, r >0\}$, which we denote by $\Gamma_{\mathbb{T}_0}$. It follows from the construction in~\cite{Ang} that $\Gamma_{\mathbb{T}_0}$ is a simple closed convex curve; it intersects the $r$-axis perpendicularly at two points; and it is symmetric with respect to reflections about the $r$-axis. Let $(\ell_0,0)$ and $(r_0,0)$ be the two points where $\Gamma_{\mathbb{T}_0}$ intersects the $r$-axis, and choose $\ell_0 < r_0$. Also, let $\delta_0$ denote the maximum $z$-coordinate of $\Gamma_{\mathbb{T}_0}$. We say that $\ell_0$ is the {\bf inner radius}, $r_0$ is the {\bf outer radius}, and $\delta_0$ is the {\bf maximum height} of the torus $\mathbb{T}_0$ (see Figure \ref{fig:doughnut}). By our choice of scaling, we have $r_0 < 1/2$ and $\delta_0 <\ve$.

Picturing $\mathbb{T}_0$ in $\mathbb{R}^{n+1}$, with coordinates $\{x_1, \dots, x_n,z\}$, so that it has rotational symmetry about the $z$-axis and reflection symmetry about the base $n$-plane $\{z=0\}$, we can interpret $\ell_0$, $r_0$, and $\delta_0$ in the following way. The projection of $\mathbb{T}_0$ into the base $n$-plane is the annulus $\{ \ell_0\leq |x| \leq r_0 \}$, and $\mathbb{T}_0$ is sandwiched between the planes $\{z =\delta_0\}$ and $\{z = - \delta_0\}$.  

\begin{figure}[h]
\includegraphics[height=2in]{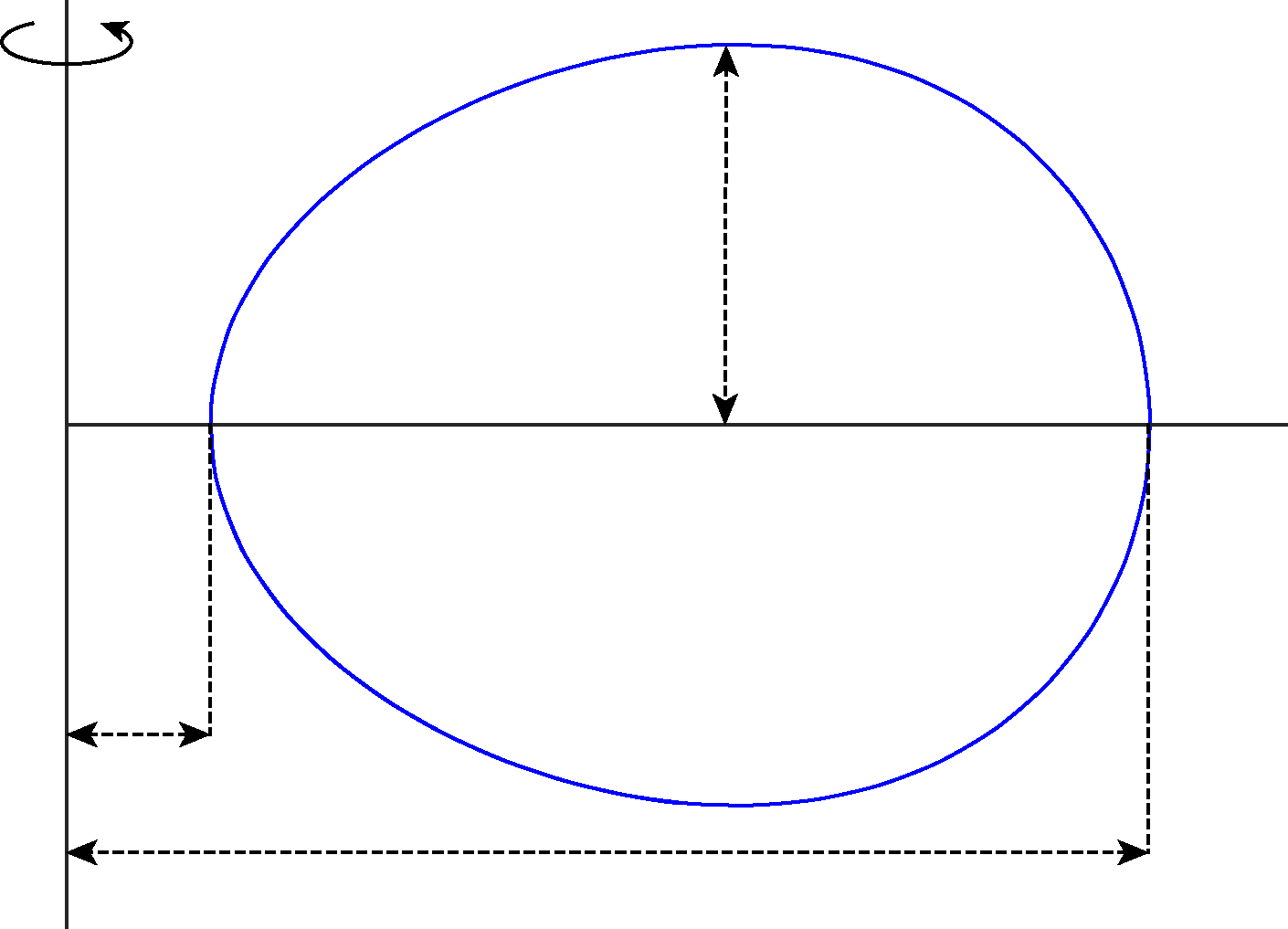}
\caption{A cross section of $\mathbb{T}_0$.}
\label{fig:doughnut}
\begin{picture}(0,0)
\put(-82,55){$\ell_0$}
\put(0,38){$r_0<1/2$}
\put(0, 140){$\delta_0$}
\put(68,155){$\Gamma_{\mathbb{T}_0}$}
\end{picture}
\end{figure}


\section{A periodic graph with thin spikes.} 
In this section, we construct the initial graph $u_0 \in C^{2,\alpha}( \mathbb R^n)$, which has tall thin spikes on the integer lattice, then study its evolution under mean curvature flow and heat flow.

\subsection{The initial graph $u_0$}
Consider the torus $\mathbb{T}_0$ described in the previous section and translate it so that it is centered at the point $(0,\delta_0) \in \mathbb{R}^n \times \mathbb{R}$. This translated torus shrinks self-similarly under mean curvature flow to its center in finite time. Using the property that $r_0 < 1/2$, we see that this torus is contained in the box $[-\frac{1}{2}, \frac{1}{2}]^n \times [0,2\delta_0]$. We also note that the projection of this torus onto the base $n$-plane is the annulus $\overline{B_{r_0}(0)} \, \backslash B_{\ell_0}(0) \subset \mathbb{R}^n$.

For each point $(m,\delta_0) \in \mathbb{Z}^n \times \mathbb{R}$, we place a copy of $\mathbb{T}_0$ centered at the point $(m, \delta_0)$ and denote it by $\mathbb{T}_{(m,\delta_0)}$. Because $r_0 < 1/2$, these tori are disjoint. We introduce two subsets of $\mathbb{R}^n$, which we identify with the base $n$-plane in $\mathbb{R}^{n+1}$. First, we let $\Omega_0$ be the region in the base $n$-plane obtained by removing the union of discs $B_{r_0}(m)$: $$\Omega_0 = \mathbb{R}^n \, \backslash \bigcup_{m \in \mathbb{Z}^n} B_{r_0}(m).$$ Second, we let $U_0$ be the union of the discs $B_{\ell_0}(m)$: $$U_0 = \bigcup_{m \in \mathbb{Z}^n} B_{\ell_0}(m).$$

Now, we are ready to define an entire function $u_0$ supported in the region $U_0$. We first define $u_0$ on the disc $B_{\ell_0}(0)$ to be a smooth non-negative function such that 
	\begin{gather*}
	u_0(x) = h_0, \quad |x| < \ell_0 / 3,\\
	u_0(x) = 0, \quad 2 \ell_0/3 < |x| <\ell_0,\\
	\int_{B_{\ell_0}} u_0 \, dvol=1,
	\end{gather*}
Next, we extend $u_0$ to be zero on the remainder of the square $[-\frac{1}{2}, \frac{1}{2}]^n$: $$u_0 = 0 \textrm{ on } [-\tfrac{1}{2}, \tfrac{1}{2}]^n \, \backslash B_{\ell_0}(0,0).$$ Finally, we define $u_0$ on all of $\mathbb{R}^n$ by making it periodic: $$u_0(x+m) = u_0(x), \textrm{ for } m \in \mathbb{Z}^n.$$ 

The graph of $u_0$ may be viewed as a collection of tall spikes. Each spike is concentrated around some point $m\in \mathbb{Z}^n$, and assuming $h_0> 2 \delta_0$, the spike passes through the hole of $\mathbb{T}_{(m,\delta_0)}$. In the next section, we describe the information these carefully placed tori give us on $u(\cdot, t)$.


\subsection{Evolution of the graph by mean curvature flow}

We begin this section by noting that for each $m \in \mathbb{Z}^n$, the torus $\mathbb{T}_{(m,\delta_0)}$ sits on or above the graph of $u_0$. Next, we run mean curvature flow. On one hand, because $u_0$ is a smooth entire function, the mean curvature flow with initial data $M_0 = \textrm{graph} \, u_0$ has a smooth solution $M_t = \textrm{graph} \, u(\cdot, t)$ that exists for all $t > 0$ (see~\cite{Eck},\cite{EH}), and an application of the strong maximum principle shows $u(\cdot,t) > 0$ for $t>0$. On the other hand, the torus  $\mathbb{T}_{(m,\delta_0)}$ shrinks to its center under the mean curvature flow in finite time $t_* < \infty$. It follows from the comparison principle for mean curvature flow that for $t>0$, the evolution of the torus  $\mathbb{T}_{(m,\delta_0)}$  is disjoint from the evolution of the entire graph $M_0$. In particular, by the time $t= t_*$, each tall spike of the original entire graph $M_0$ must pass through the hole of the surrounding torus. We will show that by the time $t=t_*$, the maximum height of $u(\cdot, t)$ is less than or equal to $\delta_0$.

The torus $\mathbb{T}_0$ evolves under mean curvature flow as a family of self-similar tori $\{ \mathbb{T}_t \}_{t \in [0,t_*)}$ that is shrinking to its center as $t$ approaches $t_*$. Let $\ell_t$, $r_t$, and $\delta_t$ denote the inner radius, outer radius, and maximum height of the torus $\mathbb{T}_t$, respectively, and set $$\Omega_t = \mathbb{R}^n \, \backslash \bigcup_{m \in \mathbb{Z}^n} B_{r_t}(m),$$ $$U_t = \bigcup_{m \in \mathbb{Z}^n} B_{\ell_t}(m).$$

\begin{proposition}
\label{prop:2}
For $t < t_*$, we have $u(\cdot, t) < \delta_0$ on $\Omega_t$.
\end{proposition}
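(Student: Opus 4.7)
My plan is to use the shrinking tori $\mathbb{T}_{(m,\delta_0)}(t)$ as closed-surface barriers for $M_t$ via the avoidance principle, and then apply the parabolic maximum principle on the time-dependent domain $\Omega_t$ to propagate the height bound from the boundary into the interior.

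The first step is to show that, for each $m \in \mathbb{Z}^n$ and every $t \in (0,t_*)$, the graph $M_t$ does not enter the open solid torus bounded by $\mathbb{T}_{(m,\delta_0)}(t)$. At $t=0$ the graph $M_0$ (which vanishes on the annular projection of the torus) touches $\mathbb{T}_{(m,\delta_0)}$ tangentially along the single circle where the bottom of the torus rests on $\{z=0\}$, so the avoidance principle does not apply directly. To bypass this I would use as barrier the time-shifted torus $\mathbb{T}_{(m,\delta_0)}(t+\epsilon)$ for small $\epsilon > 0$: at $t=0$ this is a slightly shrunken copy whose lowest point lies strictly above $\{z=0\}$ and whose annular projection is contained in $\{|x-m| > 2\ell_0/3\}$, where $u_0 \equiv 0$, so the two surfaces are strictly disjoint. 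The avoidance principle then yields $M_t \cap \mathbb{T}_{(m,\delta_0)}(t+\epsilon) = \emptyset$ for $t \in [0, t_*-\epsilon)$, and letting $\epsilon \downarrow 0$ gives the desired containment of $M_t$ in the closed exterior of the open solid torus.

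Next I will upgrade this containment to a strict height bound on $\partial \Omega_t$ using a transversality argument. Because $\Gamma_{\mathbb{T}_t}$ meets the $r$-axis perpendicularly at $(r_t,\delta_0)$, the torus $\mathbb{T}_{(m,\delta_0)}(t)$ contains the ``equator'' circle $\{(x,\delta_0) : |x-m| = r_t\}$ and has vertical tangent plane at every point there. If $u(x_0,t) = \delta_0$ held for some $x_0$ with $|x_0-m| = r_t$, then $M_t$ would meet $\mathbb{T}_{(m,\delta_0)}(t)$ at $(x_0,\delta_0)$, and since the tangent plane to a graph is never vertical the intersection would be transverse, forcing a piece of $M_t$ to cross into the open solid torus and contradicting Step~1. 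Combined with $u_0 \equiv 0 < \delta_0$ on $\partial\Omega_0$ and continuity of $u$ along the space-time boundary (a disjoint union over $m$ of connected shrinking cones), this gives $u < \delta_0$ on $\partial\Omega_t$ for all $t \in [0,t_*)$. Finally, I set $v := u - \delta_0$, which satisfies the same quasilinear parabolic MCF graph equation as $u$; by $\mathbb{Z}^n$-periodicity of $u_0$ I may pass to the quotient torus, on which $\Omega_t$ is compact for each $t$. Since $v < 0$ on $\Omega_0 \times \{0\}$ and on the lateral parabolic boundary $\bigcup_{t}\partial\Omega_t \times \{t\}$, any first-time interior zero of $v$ would violate the strong parabolic maximum principle.

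The main obstacle is the tangential touching at $t=0$, which obstructs a direct application of the avoidance principle. The time-shifted barrier cleanly circumvents it, and the transversality observation at the equator circle is what converts the weak ``does not enter the open solid torus'' conclusion of Step~1 into the strict inequality on $\partial\Omega_t$ that the maximum principle needs in the final step.
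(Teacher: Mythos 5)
Your proof is correct and takes essentially the same route as the paper: shrinking tori as barriers to control $u$ on $\partial\Omega_t$, followed by a maximum principle on the expanding domains $\Omega_t$, using $\mathbb{Z}^n$-periodicity to obtain the needed compactness. You supply more detail than the paper at the barrier step --- the $\epsilon$ time-shift of the torus to sidestep the tangential contact at $t=0$, and the transversality argument at the outer equator (where the torus has vertical tangent plane) to upgrade avoidance of the solid torus to the strict inequality $u<\delta_0$ on $\partial\Omega_t$ --- details the paper compresses into the phrase ``by comparison with the tori barriers.''
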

\begin{proof}
This is true when $t=0$ by definition of $u_0$, and by comparison with the tori barriers, we have $u(\cdot, t) < \delta_0$ on $\partial \Omega_t$, for $t<t_*$. An application of the maximum principle shows that $u(\cdot, t) < \delta_0$ on $\Omega_t$ when $t < t_*$. Because the domain we are working in is unbounded, we provide the details for the reader's convenience. Using the continuity of $u$ and the periodicity of $u(\cdot,t)$, we see that the supremum of $u(\cdot,t)$ over the closed set $\Omega_t \cup \partial \Omega_t$ is achieved and depends continuously on $t$. If the proposition does not hold, then there will be a first time $t'<t_*$ where the supremum of $u(\cdot,t')$ over $\Omega_{t'} \cup \partial \Omega_{t'}$ equals $\delta_0$. Given the height estimate on the boundary $\partial \Omega_{t'}$, this supremum will be achieved at an interior point $x' \in \Omega_{t'}$. Consequently,  we have $u(x',t') = \delta_0$, $u(\cdot,t') \leq \delta_0$ in $\Omega_{t'}$, and $u(\cdot,t) < \delta_0$ in $\Omega_t$ for $t<t'$, which contradicts the maximum principle~\cite[Chapter 2]{Frd}.
\end{proof}

\begin{proposition}
\label{prop:3}
For $t \geq t_*$, we have $u(\cdot, t) \leq \delta_0$ on $\mathbb{R}^n$.
\end{proposition}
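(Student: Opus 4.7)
The plan is to use Proposition \ref{prop:2} together with continuity to obtain the bound at the critical time $t = t_*$, and then propagate the inequality forward in time by comparing $u$ against the stationary constant solution $\delta_0$ of the mean curvature flow.

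For the first step, observe that as $t \nearrow t_*$ the shrinking tori collapse to their centers, so $r_t \to 0$ and consequently $\Omega_t$ exhausts $\mathbb{R}^n \setminus \mathbb{Z}^n$. Given any $x \notin \mathbb{Z}^n$, for all $t$ sufficiently close to $t_*$ we have $x \in \Omega_t$, so Proposition \ref{prop:2} gives $u(x,t) < \delta_0$; continuity of $u$ in $t$ then yields $u(x,t_*) \leq \delta_0$. Since $\mathbb{R}^n \setminus \mathbb{Z}^n$ is dense and $u(\cdot,t_*)$ is continuous, we conclude $u(\cdot,t_*) \leq \delta_0$ on all of $\mathbb{R}^n$.

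For the second step, the constant function $\delta_0$ is a stationary solution of \eqref{eq:mcf-nonparam}, so the difference $u(\cdot,t) - \delta_0$ satisfies a linear parabolic equation with no zeroth-order term. The initial data $u_0$ is $\mathbb{Z}^n$-periodic, and by uniqueness of smooth solutions to the graphical mean curvature flow the function $u(\cdot,t)$ is also $\mathbb{Z}^n$-periodic for every $t \geq 0$. Hence the supremum of $u(\cdot,t) - \delta_0$ on $\mathbb{R}^n$ is attained on the compact fundamental domain $[-\tfrac{1}{2},\tfrac{1}{2}]^n$ and depends continuously on $t$. Starting from the bound $u(\cdot,t_*) \leq \delta_0$ established above, the standard maximum principle argument — carried out exactly as in the proof of Proposition \ref{prop:2}, using the first time the supremum would exceed $\delta_0$ to derive a contradiction — yields $u(\cdot,t) \leq \delta_0$ for all $t \geq t_*$.

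The only subtlety is that the maximum principle is being applied on the unbounded domain $\mathbb{R}^n$, but periodicity of $u(\cdot,t)$ reduces this to a compact-domain argument, so I do not anticipate a serious obstacle; the entire proof is essentially a continuity step at $t_*$ followed by comparison with the constant $\delta_0$.
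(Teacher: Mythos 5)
Your proposal is correct and follows essentially the same route as the paper: a continuity step at $t = t_*$ using Proposition~\ref{prop:2}, followed by the maximum principle (comparison with the constant solution $\delta_0$) for $t > t_*$. The additional care you take — passing from $\mathbb{R}^n \setminus \mathbb{Z}^n$ to $\mathbb{R}^n$ by density, and invoking periodicity to reduce to a compact fundamental domain — merely fills in details the paper leaves implicit.
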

\begin{proof}
Using the previous proposition, we know that $u(\cdot, t) < \delta_0$ on $\Omega_t$ for $t < t_*$. In addition, as $t \to t_*$, we know that $\Omega_t$ converges to $\mathbb{R}^n$. Then, from the continuity of the solution, we deduce that $u(\cdot,t_*) \leq \delta_0$ on $\mathbb{R}^n$. By the maximum principle, we conclude that $u(\cdot,t) \leq \delta_0$ on $\mathbb{R}^n$, for $t \geq t_*$.
\end{proof}

We are now ready to show that the solution to the mean curvature flow converges to a constant. 
\begin{proposition}[Theorem \ref{thm:main} statement \eqref{eq:utoc0}]
The solution $u(\cdot, t)$ to the mean curvature flow \eqref{eq:mcf-nonparam} with initial condition $u(\cdot, 0)= u_0$ tends to a constant, denoted by $c_0$, uniformly in $\mathbb R^n$ as $t\to\infty$. 
\end{proposition}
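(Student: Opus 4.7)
The plan is as follows. I would first note that $u_0$ is $\mathbb{Z}^n$-periodic by construction, and since the MCF equation \eqref{eq:mcf-nonparam} is translation invariant, uniqueness of the smooth entire-graph solution \cite{EH} implies $u(\cdot,t)$ remains $\mathbb{Z}^n$-periodic for all $t \geq 0$. Consequently the spatial supremum $M(t) := \sup_{x \in \R^n} u(x,t)$ and infimum $m(t) := \inf_{x \in \R^n} u(x,t)$ are attained on the period cell $[-\tfrac12,\tfrac12]^n$. Comparing $u$ with the constant solutions $M(s)$ and $m(s)$ via the maximum principle (applied on the torus $\R^n/\mathbb{Z}^n$) shows $M$ is non-increasing and $m$ is non-decreasing in $t$. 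Together with Proposition~\ref{prop:3} and the strong maximum principle giving $u > 0$ for $t > 0$, both functions are bounded, so they admit limits $0 \leq m_\infty \leq M_\infty \leq \delta_0$. It therefore suffices to show $M_\infty = m_\infty$: from $m(t) \leq u(\cdot,t) \leq M(t)$, uniform convergence to $c_0 := M_\infty$ follows at once.

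To prove $M_\infty = m_\infty$ I would argue by compactness and rigidity. Given any sequence $t_k \to \infty$, define the time-translated solutions $u_k(x,t) := u(x,t+t_k)$, which are $\mathbb{Z}^n$-periodic graphical MCF solutions on $\R^n \times [-t_k,\infty)$ with $0 \leq u_k \leq \delta_0$ for $k$ large. Together with this uniform $C^0$ bound, the Ecker--Huisken interior estimates \cite{ecker-huisken;interior-estimates} yield uniform $C^\infty_{\mathrm{loc}}$ bounds on $u_k$ on any fixed parabolic cylinder. A diagonal Arzel\`a--Ascoli argument extracts a subsequence converging smoothly on compact subsets of $\R^n \times \R$ to an eternal, $\mathbb{Z}^n$-periodic solution $u_\infty : \R^n \times \R \to \R$ of graphical MCF satisfying $\sup_x u_\infty(x,t) = M_\infty$ and $\inf_x u_\infty(x,t) = m_\infty$ for every $t$.

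The last step is to show $u_\infty$ is constant. Rewriting \eqref{eq:mcf-nonparam} in the form $\pd_t u_\infty = a^{ij}(Du_\infty)\,\pd_{ij} u_\infty$ with the uniformly elliptic symbol $a^{ij}(p) = \delta^{ij} - p^i p^j/(1+|p|^2)$, the nonnegative difference $\phi := M_\infty - u_\infty$ satisfies the \emph{linear} uniformly parabolic equation $\pd_t \phi = a^{ij}(Du_\infty)\,\pd_{ij} \phi$ with smooth bounded coefficients. By periodicity, for each $t_0 \in \R$ there is an interior point $x_0$ where $\phi(x_0,t_0) = 0$, so the strong maximum principle \cite[Chapter 2]{Frd} forces $\phi \equiv 0$ on $\R^n \times (-\infty, t_0]$. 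Since $t_0$ is arbitrary, $u_\infty \equiv M_\infty$, and the identical argument applied to $u_\infty - m_\infty$ gives $u_\infty \equiv m_\infty$; hence $M_\infty = m_\infty$.

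I expect the main obstacle to be setting up the subsequential limit $u_\infty$ with the correct properties: one must check that the Ecker--Huisken interior estimates apply uniformly in $k$ to the shifted solutions $u_k$, and that smooth convergence on compact sets preserves the identities $\sup u_\infty(\cdot,t) = M_\infty$ and $\inf u_\infty(\cdot,t) = m_\infty$. Once this is in place, the strong maximum principle closes the argument in one stroke.
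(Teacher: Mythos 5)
Your proof is correct, but it takes a genuinely different route from the paper's. The paper argues directly from the global estimates of Ecker--Huisken: periodicity gives $|Du_0|$ bounded, so \cite[Corollary 3.2]{EH} gives $|Du(\cdot,t)|$ bounded for all time; \cite[Proposition 4.4]{EH} then gives $\|A(\cdot,t)\|\to 0$; plugging both into the pointwise inequality $\|\nabla\sqrt{1+|Du|^2}\|\le\|A\|(1+|Du|^2)$ shows the slope becomes spatially constant in the limit, and periodicity forces that constant slope to be $0$, giving uniform convergence of $u$ to a constant. By contrast, you never use the explicit decay $\|A\|\to 0$; instead you run the standard compactness-and-rigidity scheme for parabolic flows: monotonicity of $\sup$ and $\inf$ (via comparison with constants on the torus) gives limits $M_\infty\ge m_\infty$, time-translation plus the Ecker--Huisken interior estimates gives a smooth eternal periodic limit flow $u_\infty$ trapped between $m_\infty$ and $M_\infty$ with both values attained at every time, and the strong maximum principle for the linearized uniformly parabolic equation forces $u_\infty$ constant, hence $M_\infty = m_\infty$. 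The paper's argument is shorter and tailored to the specific Ecker--Huisken estimates; yours is slightly longer to set up carefully (one must verify the interior estimates give uniform $C^\infty_{\mathrm{loc}}$ control on the time-shifted solutions, which is where your $C^0$ bound from Proposition~\ref{prop:3} and periodicity enter), but it is more robust and uses only standard machinery: comparison, interior regularity, and strong maximum principle rigidity on a compact quotient. Both are valid proofs of the same statement.
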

It follows from the maximum principle that if such a constant $c_0$ exists, it cannot be bigger than $\delta_0<\ve$. 

\begin{proof}
The proof follows from results of Ecker--Huisken~\cite{EH} and the fact that $u(\cdot , t)$ is periodic in the spacial variables. Here we use the notation $Du$ for the Euclidean gradient and $\nabla u$ for the tangential gradient on the graph of $u$. In this notation we have the estimate
	\begin{equation}
	\label{eq:boundDu}
	\| \nabla (\sqrt{1+|D u |^2}) \| \leq \|A\| (1+|D u|^2),
	\end{equation}
where $A$ is the second fundamental form and $\| \cdot \|$ is the norm on the graph of $u$ ~\cite[proof of Corollary 4.2]{EH}. 

Now, using the periodicity of $u_0$, we know that $|Du_0|$ is bounded, and consequently $|Du(\cdot, t )|$ is bounded for all time by~\cite[Corollary 3.2]{EH}. Then applying~\cite[Proposition 4.4]{EH} we have that $\|A(\cdot, t)\| \to 0$ as $t\to\infty$. Because $|D u(\cdot, t )|$ is bounded for all time, the right-hand side of \eqref{eq:boundDu} goes to $0$, so that $|D u| \to c_1$ as $t\to \infty$ for some constant $c_1$. The periodicity implies that $c_1=0$ and gives the uniform convergence of $u$ to a constant as $t\to \infty$.
\end{proof}


\subsection{The average value of the initial entire graph}

The volume under the graph of $u_0$ restricted to the square $[-\frac{1}{2}, \frac{1}{2}]^n$ is 
 $\int_{[-\frac{1}{2}, \frac{1}{2}]^n} u_0 \, dvol =1$. A simple estimate shows that $$ B_{r-\sqrt{n}/2}(0) \subset \bigcup_{\substack{m \in  \mathbb{Z}^n, \\ m \in B_r(0)}} (m + [-\tfrac{1}{2}, \tfrac{1}{2}]^n) \subset B_{r+\sqrt{n}/2}(0).$$ Using these inclusions to estimate the integral of $u_0$ over unit cubes centered at points in the integer lattice, we have $$vol( B_{r-\sqrt{n}}(0) )  \leq \int_{B_r(0)} u_0 \, dvol  \leq vol( B_{r+\sqrt{n}}(0) ) ,$$ and it follows that $$\lim_{r \to \infty} \frac{\int_{B_r(0)} u_0 \, dvol}{vol( B_{r}(0) )} = 1.$$

The statement \eqref{eq:vto1} of Theorem \ref{thm:main} follows from the result of Repnikov--Eidelman \cite{repnikov-eidelman} for the heat equation. 

\section{An oscillating graph under mean curvature flow}

In this section, we expand upon the previous example to construct an example of an initial entire graph over $\mathbb{R}^n$, $n \geq 2$, that oscillates under the mean curvature flow and stabilizes to a constant under the heat flow. 

In the one-dimensional case, Nara--Taniguchi~\cite[Proposition 3.4]{NT} proved that there is a function $\phi_0(x) \in C^{2,\alpha}(\mathbb R)$ such that the solution $\phi(x,t)$ of~(\ref{eq:mcf-nonparam}) with $\phi(x,0) = \phi_0(x)$ does not stabilize. The heuristic idea  for the proof of Theorem~\ref{thm:mainosc} is to first take the one-dimensional example $\phi(x,t)$ and extend it to higher dimensions, simply by noticing that $\psi(x_1, \cdots,x_n,t) = \phi(x_1,t)$ is also a solution to the graphical mean curvature flow. Then by placing spikes with sufficient volume in the regions where the initial function $\phi(x,0)$ is zero, we can construct an initial function $w_0 \in C^{2,\alpha}(\mathbb{R}^n)$ that will stabilize to the constant 1 under the heat flow. However, since those spikes shrink very quickly under the mean curvature flow, the evolution of $w_0$ under the mean curvature flow mimics the  oscillatory behavior of $\phi(x,t)$.

We begin the proof of Theorem~\ref{thm:mainosc} by stating a scaled version of the example from the Nara--Taniguchi paper \cite{NT}. 

\begin{proposition} [Proposition 3.4 \cite{NT}]
\label{prop:NT}
Fix constants $\ell$, $a$, and $b$ with $0 < \ell \leq 1/2$ and $a<b$. Set $I_0 = [0, 1 -\ell]$ and $I_m = [m!+ \ell, (m+1)!-\ell]$, for $m \geq 1$. Let $\phi_0(x) = \phi_0^{\ell,a,b}(x) \in C^{2,\alpha}(\mathbb R)$ be a function that satisfies
\begin{enumerate}

\item[(1.)]  $a \leq \phi_0(x) \leq b$ for $x \in \mathbb R$.

\item[(2.)] $ \phi_0(x) = a$, for $|x| \in I_m$, $m \in \{1,3,5,\ldots\}.$

\item[(3.)] $ \phi_0(x) = b$, for $|x| \in I_m$, $m \in \{0,2,4,\ldots\}.$
\end{enumerate}
Then the solution $\phi(x,t)$ of~(\ref{eq:mcf-nonparam}) (the curve shortening flow) with $\phi(x,0) = \phi_0(x)$ satisfies 
	\[
	\liminf_{t\to \infty} \phi(0,t) = a, \quad \limsup_{t \to \infty} \phi(0,t) = b.
	\]
\end{proposition}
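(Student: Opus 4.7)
The plan is to exploit the parabolic averaging property of the one-dimensional graphical curve shortening flow. After reducing $\phi_t = \phi_{xx}/(1+\phi_x^2)$ to a uniformly parabolic equation via gradient estimates, I expect the value $\phi(0,t)$ to be close to a spatial average of $\phi_0$ at scale $\sqrt{t}$. The factorial growth of the lengths $|I_m| \sim m\cdot m!$ then forces that for each $m$ there is a time scale $t_m \sim ((m+1)!)^2$ at which $I_m$ (together with its reflection $-I_m$) dominates the averaging window, so that $\phi(0,t_m)$ is close to the value of $\phi_0$ on $I_m$, which alternates between $a$ and $b$ with the parity of $m$.

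First I would collect the basic facts. Since $\phi_0(-x) = \phi_0(x)$, uniqueness gives $\phi(-x,t) = \phi(x,t)$. Since $a$ and $b$ are stationary solutions of \eqref{eq:mcf-nonparam}, the comparison principle yields $a \leq \phi(x,t) \leq b$ on $\R \times [0,\infty)$. A one-dimensional gradient estimate for graphical curve shortening bounds $|\phi_x|$ uniformly in $t$ by $\sup |\phi_0'|$, so $1/(1+\phi_x^2)$ stays between positive constants and the PDE is uniformly parabolic with constants depending only on $\phi_0$.

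To prove $\limsup_{t\to\infty}\phi(0,t) = b$, I would, for each large even $m$, construct a smooth modification $\underline{\phi}_0 \leq \phi_0$ that equals $a$ outside $[-(m+1)!, (m+1)!]$, retains the block where $\phi_0=b$ on $|x|\in I_m$, and lies in $[a,b]$ in between. Let $\underline{\phi}$ solve \eqref{eq:mcf-nonparam} with initial data $\underline{\phi}_0$; by comparison $\underline{\phi}(0,t) \leq \phi(0,t)$. Gaussian-type parabolic estimates for the uniformly parabolic linearized equation then show that at $t_m = ((m+1)!)^2$ the value $\underline{\phi}(0,t_m)$ is close to an average of $\underline{\phi}_0$ over an interval of length $\asymp (m+1)!$ containing $I_m\cup(-I_m)$. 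Since $|I_m|=(m+1)!-m!-2\ell$ while $|I_0\cup\cdots\cup I_{m-1}|\leq m!$, this average is at least $b - O(1/m)$ and tends to $b$ as $m\to\infty$, giving $\limsup\phi(0,t)\geq b$; combined with $\phi\leq b$ this yields equality. The identity $\liminf_{t\to\infty}\phi(0,t) = a$ follows from a symmetric argument using a supersolution $\overline{\phi}_0 \geq \phi_0$ for odd $m$.

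The main obstacle I expect is making the parabolic-averaging step rigorous for the quasi-linear equation: although the gradient estimate renders the PDE uniformly parabolic, the coefficient $1/(1+\phi_x^2)$ depends on the solution, so one cannot directly quote an Aronson-type Gaussian kernel bound. The cleanest route is probably to construct explicit shelf-shaped sub- and supersolutions, perhaps by gluing rescaled self-similar or translating curve shortening solutions matched to the block structure of $\phi_0$, and let the comparison principle handle the rest. The scaling $t_m = ((m+1)!)^2$ is then dictated by the parabolic scaling of these barriers against an interval of length $(m+1)!$.
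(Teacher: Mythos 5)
The paper does not prove this Proposition; it quotes it as~\cite[Proposition~3.4]{NT}, so there is no internal proof to compare your attempt against. The Nara--Taniguchi proof goes through the main theorem of~\cite{NT}: for $C^{2,\alpha}$ initial data the curve shortening flow and the heat flow with the same initial data satisfy $\sup_x|\phi(x,t)-v(x,t)|\to 0$ as $t\to\infty$, which transfers the $\liminf/\limsup$ statement to the heat equation, where the Gaussian averaging you describe is exact. Your proposal tries to run the averaging argument directly on the quasilinear flow, bypassing the $\phi\approx v$ theorem; that would be a genuinely different and more self-contained route if it closed.

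As written, though, there are two gaps. The smaller one is quantitative: at $t_m=((m+1)!)^2$ the Gaussian at the origin has width $\asymp(m+1)!$, so a fixed fraction (about half) of its mass lies \emph{outside} $[-(m+1)!,(m+1)!]$, where your subsolution $\underline{\phi}_0$ equals $a$; the resulting average is bounded strictly away from $b$, not $b-O(1/m)$. You need $m!\ll\sqrt{t_m}\ll(m+1)!$, e.g.\ $t_m=(m+1)(m!)^2$, to concentrate the kernel mass on $I_m\cup(-I_m)$. The larger gap is the averaging step itself, which you flag but do not resolve. Freezing the coefficient $a(x,t)=1/(1+\underline{\phi}_x^2)\in[\lambda,1]$ and invoking Aronson's two-sided Gaussian bounds on the fundamental solution does \emph{not} suffice: the lower and upper Aronson bounds have mismatched exponential constants, so one only obtains $\limsup\phi(0,t)\geq a+c(b-a)$ for some $c\in(0,1)$ depending on the ellipticity ratio, not the sharp value $b$. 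To recover $b$ exactly one needs the fundamental solution to asymptotically coincide with the standard heat kernel, which in turn requires $\sup_x|\phi_x(x,t)|\to 0$ and a quantitative rate---but proving that is essentially the content of the Nara--Taniguchi convergence theorem. The alternative you suggest, gluing explicit sub/supersolutions from translators or self-similar curves, is plausible and would sidestep this, but that construction is the entire proof and is left open. So the outline is a reasonable different strategy with a genuine missing step at its core.
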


For later purpose,  let us consider the two families of slabs for $k \geq 1$
	\begin{align*}
	S_{2k} &= \{ x \in \mathbb R^n: |x_1| \in [ (2k)! -\tfrac{1}{4}, (2k+1)! + \tfrac{1}{4}]\}, \\
	S_{2k+1} &= \{ x \in \mathbb R^n: |x_1| \in [ (2k+1)! +\tfrac{1}{4}, (2k+2)! -\tfrac{1}{4}]\}.
	\end{align*}
Note that $S_{2k+1} \subset I_{2k+1}\times \mathbb R^{n-1}$ and $S_{2k} \supset I_{2k}\times \mathbb R^{n-1}$ if $\ell < \frac{1}{4}$.

\subsection{The initial graph $w_0(x)$}
Let $\ell_0$, $r_0$, and $\delta_0$ be the constants of the shrinking torus from Section \ref{sec:torus}. In addition, we assume without loss of generality that $r_0<1/4 - l_0$ and $\max (r_0, \delta_0)<\ve$.

We pick $a=0$, $b=1$, $\ell = \ell_0$ and a function $\phi_0$ that satisfies the conditions (1.)-(3.) of Proposition \ref{prop:NT}. We extend it to a function $\psi_0: \mathbb R^n \to \R$ by taking 
	\[
	\psi_0 (x) = \psi_0(x_1, x_2, \ldots, x_n) := \phi_0 (x_1).
	\]

Let $u_0(x)$ be the example from Theorem~\ref{thm:main}. Recall that $\int_{[-\frac{1}{2}, \frac{1}{2}]^n} u_0  = 1$. We define the function $w_0: \mathbb R^n \to [0,\infty)$ by
	\[
	w_0(x) = u_0(x) \left(\sum_{k=1}^{\infty} \chi_{S_{2k+1}}\right) + \psi_0(x),
	\]
where  $\chi_{A}$ is the characteristic function of the set $A$ (see picture). 

\def\JPicScale{0.73}
\ifx\JPicScale\undefined\def\JPicScale{1}\fi
\unitlength \JPicScale mm

\begin{picture}(162.5,68)(0,0)
\linethickness{0.7mm}
\qbezier(-0,30)(1.48,30)(1.56,30)
\qbezier(1.56,30)(1.64,30)(5,30)
\qbezier(5,30)(8.59,29.69)(9.38,31.25)
\qbezier(9.38,31.25)(10.16,32.81)(10,40)
\qbezier(10,40)(9.84,47.19)(10.62,48.75)
\qbezier(10.62,48.75)(11.41,50.31)(15,50)
\qbezier(15,50)(19.38,50)(27.5,50)
\qbezier(27.5,50)(35.62,50)(40,50)
\qbezier(40,50)(43.59,50.31)(44.38,48.75)
\qbezier(44.38,48.75)(45.16,47.19)(45,40)
\qbezier(45,40)(44.84,32.81)(45.62,31.25)
\qbezier(45.62,31.25)(46.41,29.69)(50,30)
\qbezier(50,30)(57.19,30)(85,30)
\qbezier(85,30)(112.81,30)(120,30)
\qbezier(120,30)(123.59,29.69)(124.38,31.25)
\qbezier(124.38,31.25)(125.16,32.81)(125,40)
\qbezier(125,40)(124.84,47.19)(125.62,48.75)
\qbezier(125.62,48.75)(126.41,50.31)(130,50)
\qbezier(130,50)(134.61,50)(143.44,50)
\qbezier(143.44,50)(159.27,50)(162,50)
\linethickness{0.3mm}
\multiput(10,12.5)(0,2){23}{\line(0,1){1}}
\linethickness{0.3mm}
\multiput(45,12.5)(0,2){23}{\line(0,1){1}}
\linethickness{0.3mm}
\multiput(125,12.5)(0,2){23}{\line(0,1){1}}
\linethickness{0.3mm}
\multiput(6.25,16.25)(0,3.93){11}{\line(0,1){1.96}}
\linethickness{0.3mm}
\multiput(13.75,25)(0,3.82){9}{\line(0,1){1.91}}
\linethickness{0.3mm}
\multiput(41.25,25)(0,3.82){9}{\line(0,1){1.91}}
\linethickness{0.3mm}
\multiput(48.75,16.25)(0,3.93){11}{\line(0,1){1.96}}
\linethickness{0.3mm}
\multiput(121.25,16.25)(0,3.93){11}{\line(0,1){1.96}}
\linethickness{0.3mm}
\multiput(128.75,25)(0,3.82){9}{\line(0,1){1.91}}
\linethickness{0.3mm}
\put(48.75,25){\line(1,0){72.5}}
\put(121.25,25){\vector(1,0){0.12}}
\put(48.75,25){\vector(-1,0){0.12}}
\linethickness{0.3mm}
\put(6.25,20){\line(1,0){42.5}}
\put(48.75,20){\vector(1,0){0.12}}
\put(6.25,20){\vector(-1,0){0.12}}
\linethickness{0.3mm}
\put(0,30){\line(1,0){162.5}}
\put(162.5,30){\vector(1,0){0.12}}
\linethickness{0.7mm}
\linethickness{0.3mm}
\put(121.25,55){\line(1,0){3.75}}
\put(125,55){\vector(1,0){0.12}}
\put(121.25,55){\vector(-1,0){0.12}}
\put(86.25,21.25){\makebox(0,0)[cc]{$S_{2k+1}$}}

\put(72.5,21.25){\makebox(0,0)[cc]{}}

\put(23.75,17.5){\makebox(0,0)[cc]{$S_{2k}$}}

\put(122.5,60){\makebox(0,0)[cc]{$\frac{1}{4}$}}

\put(85,41.25){\makebox(0,0)[cc]{Add the graph of $u_0$,} }
\put(85, 36.50){\makebox(0,0)[cc]{i.e. add spikes here.}}

\put(66.25,48.75){\makebox(0,0)[cc]{}}

\put(160,26.25){\makebox(0,0)[cc]{$x_1$}}

\put(10,10){\makebox(0,0)[cc]{$(2k)!$}}

\put(45,10){\makebox(0,0)[cc]{$(2k+1)!$}}

\put(125,10){\makebox(0,0)[cc]{$(2k+2)!$}}

\linethickness{0.7mm}

\put(155,55){\makebox(0,0)[cl]{$\psi_0$}}

\linethickness{0.3mm}
\put(121.25,20){\line(1,0){41.25}}
\put(121.25,20){\vector(-1,0){0.12}}
\put(142.5,17.5){\makebox(0,0)[cc]{$S_{2k+2}$}}

\put(85,21.25){\makebox(0,0)[cc]{}}

\linethickness{0.3mm}
\put(125,55){\line(1,0){3.75}}
\put(128.75,55){\vector(1,0){0.12}}
\put(125,55){\vector(-1,0){0.12}}
\linethickness{0.3mm}
\put(45,55){\line(1,0){3.75}}
\put(48.75,55){\vector(1,0){0.12}}
\put(45,55){\vector(-1,0){0.12}}
\linethickness{0.3mm}
\put(41.25,55){\line(1,0){3.75}}
\put(45,55){\vector(1,0){0.12}}
\put(41.25,55){\vector(-1,0){0.12}}
\linethickness{0.3mm}
\put(10,55){\line(1,0){3.75}}
\put(13.75,55){\vector(1,0){0.12}}
\put(10,55){\vector(-1,0){0.12}}
\linethickness{0.3mm}
\put(6.25,55){\line(1,0){3.75}}
\put(10,55){\vector(1,0){0.12}}
\put(6.25,55){\vector(-1,0){0.12}}
\put(127.5,60){\makebox(0,0)[cc]{$\frac{1}{4}$}}

\put(46.25,60){\makebox(0,0)[cc]{$\frac{1}{4}$}}

\put(42.5,60){\makebox(0,0)[cc]{$\frac{1}{4}$}}

\put(11.25,60){\makebox(0,0)[cc]{$\frac{1}{4}$}}

\put(7.5,60){\makebox(0,0)[cc]{$\frac{1}{4}$}}

\end{picture}

\subsection{Evolution by heat flow}
We note that 
	\begin{equation}
	\label{eq:average}
	 \lim_{r \to \infty} \frac{\int_{B_r(x)} w_0 \, dvol}{vol( B_{r}(x) )} = 1,
	 \end{equation}
with uniform convergence for $x \in \mathbb R^n$ by construction. Note that the existence of the limit \eqref{eq:average} at $x=0$ is a necessary and sufficient condition for the convergence of $v(\cdot, t)$ on compact subsets of $\mathbb R^n$ as $t \to \infty$ \cite{repnikov-eidelman}. The uniform convergence of the limit \eqref{eq:average} is equivalent to the uniform convergence of $v(\cdot, t)$ to 1 as $t\to \infty$ \cite{kamin}. 

\subsection{Barriers for the mean curvature flow}

Let $w(x,t)$ be the solution of~(\ref{eq:mcf-nonparam}) with $w(x,0) = w_0(x)$. Also, let $\mathbb S_0$ be the shrinking sphere with extinction time $t_*$. The radius of such a sphere is denoted by $\rho_0$ and we know from \cite{Ang} that $\rho_0<r_0$. 

We use spheres in $\mathbb{R}^{n+1}$ centered at points $(x, \rho_0)$ with $x \in \partial S_{2k+1}$ as barriers to obtain
\[
w(x,t_*) < \rho_0, \quad x \in \partial S_{2k+1}.
\]
Because the solution $w(x,t)$ is periodic in $x_2, x_3, \ldots, x_n$, an argument similar to the one from Propositions \ref{prop:2}  and \ref{prop:3} for the punctured slabs $\Omega_t \cap S_{2k+1}$   gives us that 
\[
w(x, t_*) < \max (\rho_0, \delta_0)<\ve, \quad  x \in S_{2k+1}.
\]
By placing spheres above $w_0$ in $S_{2k}$, i.e. spheres centered at $(x, 1+\rho_0)$, for $x \in S_{2k}$, we obtain
\[
w(x,t_*) <1 + \rho_0, \quad x \in S_{2k}.
\]

It follows that at time $t_*$, we can construct an upper barrier for $w(x,t_*)$ by taking $w^+(x,t_*)=\phi^+_0(x_1)$, where $\phi_0^+ \in  C^{2,\alpha}(\mathbb R)$ and satisfies
	\begin{gather*}
	\ve \leq \phi^+_0(x_1) \leq 1+\rho_0\\
	\phi^+_0 (x_1) = 1+\rho_0, \quad |x_1| \in [ (2k)! -\tfrac{1}{4}, (2k+1)! + \tfrac{1}{4}]\\
	\phi^+_0(x_1) = \ve, \quad |x_1| \in [ (2k+1)! +\tfrac{1}{2}, (2k+2)! -\tfrac{1}{2}]
	\end{gather*}
By Proposition \ref{prop:NT}, the solution $\phi^+(x_1,t)$ to the curve shortening flow with initial data $\phi^+(x_1,0) = \phi^+_0(x_1)$ oscillates and $\liminf_{t\to\infty} \phi^+(0,t) \leq \ve$. The function $w^{+}(x,t) = \phi^+(x_1,t-t_*)$ is a solution to the graphical mean curvature flow and an upper barrier for $w(x,t)$ for $t >t_*$. We therefore have
	\[
	\liminf_{t\to \infty} w(0,t) \leq \liminf_{t\to \infty} w^+(0,t) \leq \ve.
	\]
The function $\psi(x,t)$, which is the solution to \eqref{eq:mcf-nonparam} with initial data $\psi_0(x)$, serves as a lower barrier and we obtain
	\[
	\limsup_{t\to\infty} w(0,t) \geq \limsup_{t\to \infty} \psi(0,t) = 1. 
	\]
This completes the proof of Theorem~\ref{thm:mainosc}.

\def\cprime{$'$}


\begin{thebibliography}{00}


\bibitem{Ang}
{\sc S.~B. Angenent}, {\em Shrinking doughnuts}, in Nonlinear diffusion
  equations and their equilibrium states, 3 (Gregynog, 1989), vol.~7 of Progr.
  Nonlinear Differential Equations Appl., Birkh{\"a}user Boston, Boston, MA,
  1992, pp.~21--38.

\bibitem{clutterbuck-schnurer-2011}
{\sc J.~Clutterbuck and O.~C. Schn{{\"u}}rer}, {\em Stability of mean convex
  cones under mean curvature flow}, Math. Z., 267 (2011), pp.~535--547.

\bibitem{Eck}
{\sc K.~Ecker}, {\em Regularity theory for mean curvature flow}, Progress in
  Nonlinear Differential Equations and their Applications, 57, Birkh{\"a}user
  Boston Inc., Boston, MA, 2004.

\bibitem{EH}
{\sc K.~Ecker and G.~Huisken}, {\em Mean curvature evolution of entire graphs},
  Ann. of Math. (2), 130 (1989), pp.~453--471.

\bibitem{ecker-huisken;interior-estimates}
\leavevmode\vrule height 2pt depth -1.6pt width 23pt, {\em Interior estimates
  for hypersurfaces moving by mean curvature}, Invent. Math., 105 (1991),
  pp.~547--569.

\bibitem{grayson;embedded-curves}
{\sc M.~A. Grayson}, {\em The heat equation shrinks embedded plane curves to
  round points}, J. Differential Geom., 26 (1987), pp.~285--314.

\bibitem{grayson;evolution-surfaces}
\leavevmode\vrule height 2pt depth -1.6pt width 23pt, {\em A short note on the
  evolution of a surface by its mean curvature}, Duke Math. J., 58 (1989),
  pp.~555--558.
  
\bibitem{kamin}
{\sc S.~Kamin}, {\em On stabilisation of solutions of the {C}auchy problem for
  parabolic equations}, Proc. Roy. Soc. Edinburgh Sect. A, 76 (1976/77),
  pp.~43--53.
  
\bibitem{nara;siam2008}
{\sc M.~Nara}, {\em Large time behavior of radially symmetric surfaces in the
  mean curvature flow}, SIAM J. Math. Anal., 39 (2008), pp.~1978--1995.

\bibitem{NT}
{\sc M.~Nara and M.~Taniguchi}, {\em The condition on the stability of stationary lines in a curvature flow in the whole plane}, J. Differential Equations, 237 (2007), pp.~61--76.

\bibitem{repnikov-eidelman}
{\sc V.~D. Repnikov and S.~D. {{\`E}}{\u\i}del{\cprime}man}, {\em A new proof
  of the theorem on the stabilization of the solution of the {C}auchy problem
  for the heat equation}, Mat. Sb. (N.S.), 73 (115) (1967), pp.~155--159.

\end{thebibliography}
\end{document}